\documentclass[pamm,a4paper,fleqn]{w-art}
\usepackage{times,cite,w-thm}
\usepackage[T1]{fontenc}
\usepackage[utf8]{inputenc}

\usepackage{amsfonts}
\usepackage{mathtools}

\usepackage{verbatim}

\theoremstyle{plain}

\theoremstyle{definition}

\usepackage{graphicx}

\newcommand{\C}{\mathbb{C}}

\newcommand{\N}{\mathbb{N}}
\newcommand{\HH}{\mathcal{H}}
\newcommand{\D}{\mathbb{D}}
\newcommand{\HHH}{\mathbb{H}}
\newcommand{\SSS}{\mathbb{S}}
\newcommand{\Id}{I}
\DeclareMathOperator{\Span}{span}

\definecolor{myBlue}{RGB}{30,144,255} 
\definecolor{myGreen}{RGB}{69,169,0} 
\definecolor{myRed}{RGB}{165,12,42}
\definecolor{myOrange}{RGB}{225,92,22}
\definecolor{dgreen}{rgb}{0,.8,0.2}
\newcommand{\aaa}[1]{{\color{black} #1}}

\newcommand{\se}[1]{{\color{black} #1}}

\begin{document}

\TitleLanguage[EN]
\title{Connection of hypocoercivity and hypocontractivity via the $\theta$--methods}


\author{\firstname{Anton}  \lastname{Arnold}\inst{1,}%
  \footnote{e-mail \ElectronicMail{anton.arnold@tuwien.ac.at}}}

\address[\inst{1}]{\CountryCode[AT]TU Wien, Institute of Analysis and Scientific Computing, Wiedner Hauptstr. 8-10, A-1040 Wien, Austria}
\author{\firstname{Stefan} \lastname{Egger}\inst{1,}%
  \footnote{Corresponding author: e-mail \ElectronicMail{stefan.egger@tuwien.ac.at}}}

\AbstractLanguage[EN]
\begin{abstract}
Recent literature shows that hypocoercivity properties of linear evolution equations (in particular their exponential decay and the sharp short time decay of their propagator norm) carry over to their discretization via the midpoint rule. This note discusses this connection for the (other) $\theta$--methods, i.e.\ for $\theta\ne\frac12$. 

It is shown that any implicit discretization with $\theta\in (\frac12,1]$ (pertaining to a hypocoercive continuous-time evolution equation) is contractive, and not only hypocontractive -- in contrast to the midpoint rule. For a coercive continuous-time evolution equation, a discretization with $\theta\in [0,\frac12)$ is contractive for time steps small enough.
\end{abstract}
\maketitle                   

\section{Introduction}

This note is concerned with the comparison of the short- and large-time behavior of linear evolution equations to the related behavior of certain time discretizations. In particular we are concerned with \emph{semi-dissipative} systems in a separable Hilbert space $\HH$:
\begin{equation}\label{ODE}
    \dot x=-Bx,\quad x(0)=x_0,\quad t>0,
\end{equation}
with some bounded linear operator $B\in \mathcal{B}(\HH)$, where its Hermitian part satisfies $B_H:=\frac12(B+B^*)\ge0$. Moreover, we shall assume that $B$ has a trivial kernel \se{in order to simplify the definition of hypocoercivity. Note that this can always be achieved by restricting the space $\HH$ to $\ker(B)^\perp$ (see also §I of \cite{Vil09})}. 
As a discrete counterpart we shall consider here the $\theta$-methods with $\theta\in[0,1]$ and step size $\tau>0$, i.e.:
\begin{equation}\label{theta1}
    \frac{x_{k+1}-x_k}{\tau}=-B\big(\theta x_{k+1}+(1-\theta)x_k\big), \quad k\in\N_0,
\end{equation}
or after rearrangement
\begin{equation}\label{theta2}
    x_{k+1}=Dx_k, \quad 
    \mbox{with the operator } D:=(\Id +\tau\theta B)^{-1} (\Id-\tau(1-\theta)B),
    \quad k\in\N_0.
\end{equation}
Our goal is to connect hypocoercivity properties of $B$ in \eqref{ODE} with hypocontractivity properties of $D$ in \eqref{theta2}. In \S4 of \cite{arnold2026connectionhypocoercivityhypocontractivitycayley} this connection was analyzed in detail for the midpoint rule (or Crank-Nicolson scheme), i.e.\ for $\theta=\frac12$. Here we shall extend that discussion to the other $\theta$-methods.

For the continuous-time dynamical system \eqref{ODE} we recall the following definitions from \cite{Vil09, AAC}: 
\begin{definition}
Let $B\in\mathcal{B}(\HH)$.
\begin{enumerate}
    \item[(i)] $B$ is called \emph{hypocoercive} if there exist constants $C\ge1$ and $\lambda>0$ such that the solutions to \eqref{ODE} satisfy
    \begin{equation}
        \forall x_0\in\HH,\quad \forall t\ge0:\quad 
        \|x(t)\| \le C e^{-\lambda t} \|x_0\|,
    \end{equation}
    where $\|\cdot\|$ denotes the norm in $\HH$.
    \footnote{\aaa{If the kernel of $B$ would not be trivial but rather $\Span\{x_\infty\}$, then the required decay estimate would rather read $\|x(t)-x_\infty\| \le C e^{-\lambda t} \|x_0-x_\infty\|$, where the equilibrium $x_\infty$ would be normalized as $x_\infty =P_{\ker(B)}x_0$.}}
    \item[(ii)] Let $-B$ be semi-dissipative, i.e.\ $B_H\ge0$. The \emph{hypocoercivity index (HC-index)} $m_{HC}$ of $B$ is defined as the smallest integer $m\in\N_0$ (if it exists) such that 
    \begin{equation}
        \sum_{j=0}^m (B^*)^j B_H B^j \ge \kappa \Id
    \end{equation}
    for some $\kappa>0$.
\end{enumerate}
\end{definition}

As a key result, this hypocoercivity index characterizes the short-time behavior of the propagator norm for \eqref{ODE} (see \cite[Th. 2.7]{AAC2} for the finite dimensional case and \cite[Th. 4.1]{AchAMN25} for the infinite dimensional case):
\begin{theorem}
Let $-B\in\mathcal{B}(\HH)$ be semi-dissipative. Then, $B$ is hypocoercive (with HC-index $m_{HC}$) if and only if
$$
  \|e^{-Bt}\| = 1-ct^a+ o(t^{a})\quad \mbox{for } t\to0,
$$
with some $a,\,c>0$. In this case, necessarily $a=2m_{HC}+1$.
\end{theorem}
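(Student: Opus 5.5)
The plan is to handle the two implications separately. I work with the expansion of $\|e^{-Bt}x\|^2$ in $t$, uniformly over unit vectors $x$. The natural quantity is
\[
\|e^{-Bt}x\|^2=\langle x,e^{-B^*t}e^{-Bt}x\rangle,
\qquad \frac{d}{dt}\|e^{-Bt}x\|^2=-2\langle e^{-Bt}x,B_He^{-Bt}x\rangle .
\]
Integrating and Taylor-expanding $e^{-Bt}$ gives
\[
1-\|e^{-Bt}x\|^2=2\int_0^t\|B_H^{1/2}e^{-Bs}x\|^2\,ds .
\]
Since $B$ is bounded, I expand $B_H^{1/2}e^{-Bs}x=\sum_j\frac{(-s)^j}{j!}B_H^{1/2}B^jx$. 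The polynomial part truncated at order $m$ has an error bounded uniformly in $\|x\|=1$ by $O(s^{m+1})$.

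For the direction ``hypocoercive $\Rightarrow$ expansion'', let $m=m_{HC}$. The key finite-dimensional-style lemma I want is
\[
\int_0^t\Big\|\sum_{j=0}^m\frac{(-s)^j}{j!}B_H^{1/2}B^jx\Big\|^2ds\;\ge\; c\,t^{2m+1}\|x\|^2 .
\]
To get it, rescale $s=t\sigma$. The integrand becomes a quadratic form in the vectors $y_j=t^jB_H^{1/2}B^jx$ with a Gram-type weight matrix $\big(\int_0^1\sigma^{i+j}d\sigma/(i!j!)\big)_{ij}$, which is a Hilbert-like positive definite matrix. This bounds the integral below by $c'\,t\sum_j t^{2j}\|B_H^{1/2}B^jx\|^2\ge c'\,t^{2m+1}\sum_j\|B_H^{1/2}B^jx\|^2\ge c'\kappa\,t^{2m+1}$ for $t\le1$. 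The upper bound also has order $t^{2m+1}$, but only after identifying a minimizing direction. The supremum of $\|e^{-Bt}x\|$ is achieved near vectors where all lower terms are small. Thus I need both $\sup_x$ and $\inf_x$ estimates to get $1-\|e^{-Bt}\|^2\asymp t^{2m+1}$. To obtain an honest asymptotic $1-ct^{a}+o(t^a)$, I would first show that $t^{-(2m+1)}(1-\|e^{-Bt}\|)$ converges. The limit is $c=\inf_{\|x\|=1}$ of the limiting rescaled quadratic functional. This functional is the minimum over the polynomial structure, which requires analyzing the nested kernels $\ker B_H\cap\ker B_HB\cap\dots$. Since the $j<m$ conditions do not give coercivity, there exist approximate null sequences for the lower-order sums, and these realize the exponent exactly $2m+1$.

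For the converse, assume $\|e^{-Bt}\|=1-ct^a+o(t^a)$. Then $\|e^{-Bt}\|<1$ for some small $t_0$, and the semigroup property gives exponential decay, so $B$ is hypocoercive. The remaining claim is $a=2m_{HC}+1$. Existence of $m_{HC}$ follows if I show that hypocoercivity forces $\sum_{j\le m}(B^*)^jB_HB^j\ge\kappa$ for some finite $m$. In infinite dimensions this is precisely the delicate point, and I would simply quote \cite[Th. 4.1]{AchAMN25}. Given $m_{HC}$, the first direction then identifies $a=2m_{HC}+1$ by uniqueness of the asymptotic exponent.

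The main obstacle is the sharp upper bound, i.e.\ that $1-\|e^{-Bt}\|$ is not larger than $O(t^{2m+1})$. The same issue appears in the converse, in the finiteness of $m_{HC}$ in infinite dimensions. My first attempt is to build test vectors $x_t$ with $\|B_H^{1/2}B^jx_t\|\lesssim t^{m-j}$ for $j<m$, using the failure of coercivity at level $m-1$. In infinite dimensions this reduces to approximate eigenvector arguments. Failing that, I rely on the cited results \cite{AAC2,AchAMN25}, which is presumably how the paper treats this recalled theorem.
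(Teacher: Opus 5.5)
The paper gives no proof of this statement; it is quoted from \cite[Th. 2.7]{AAC2} and \cite[Th. 4.1]{AchAMN25}. Where you fall back on those references you coincide with the paper. As a self-contained sketch, however, your proposal puts the difficulty in the wrong place and leaves the one nontrivial step unproved.

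\textbf{What you have, and what is easy.} Your lower bound $1-\|e^{-Bt}\|^2\ge c_1t^{2m+1}$ is correct. The step you call the main obstacle is immediate. For $m=m_{HC}\ge1$, non-coercivity of $\sum_{j<m}(B^*)^jB_HB^j$ gives, for each $t\in(0,1]$, a unit vector $x_t$ with $\sum_{j<m}\|B_H^{1/2}B^jx_t\|^2\le t^{2m}$. Hence $\|B_H^{1/2}e^{-Bs}x_t\|\le e^{s}t^m+Cs^m$ on $[0,t]$, and so $1-\|e^{-Bt}\|^2\le 1-\|e^{-Bt}x_t\|^2\le C't^{2m+1}$. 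No approximate-eigenvector argument is needed.

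Finiteness of $m_{HC}$ for bounded hypocoercive $B$ is equally elementary. Suppose $\|e^{-BT}\|^2=1-\eta<1$, and let $\rho_m\to0$ bound the tail of the series of $B_H^{1/2}e^{-Bs}$ beyond order $m$ on $[0,T]$. A unit $x$ with $\sum_{j\le m}\|B_H^{1/2}B^jx\|^2\le\delta$ would give $\eta\le 2\int_0^T\|B_H^{1/2}e^{-Bs}x\|^2ds\le 2T(e^T\sqrt\delta+\rho_m)^2$. This is impossible for $m$ large and $\delta$ small. These two-sided bounds already settle the converse direction and the identity $a=2m_{HC}+1$.

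\textbf{The genuine gap.} What is missing is the existence of $c=\lim_{t\to0}t^{-(2m+1)}(1-\|e^{-Bt}\|)$. You announce this limit but never argue it. Your tentative description of $c$ as the infimum of a limiting rescaled functional built on the nested kernels $\bigcap_{j<m}\ker(B_H^{1/2}B^j)$ does not work as stated, for two reasons.
\begin{enumerate}
\item[(i)] In infinite dimensions these kernels can be trivial while $m_{HC}\ge1$, since only approximate null sequences exist.
\item[(ii)] Even in finite dimensions the near-maximizing vectors depend on $t$, so a pointwise limit of your functional gives the wrong constant.
\end{enumerate}
The paper's first example shows (ii) concretely. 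There $m_{HC}=1$ and $\ker B_H=\Span\{e_1\}$. Writing $B=\frac12\Id+N$ with $N^2=0$ gives $e^{-Bt}=e^{-t/2}(\Id-tN)$. One finds $\|e^{-Bt}\|=e^{-t/2}\big(\frac t2+\sqrt{1+t^2/4}\big)=1-\frac{t^3}{48}+O(t^4)$. On the other hand, $1-\|e^{-Bt}e_1\|=\frac{t^3}{12}+O(t^4)$. The maximizing direction is $e_1+\frac t2Be_1+O(t^2)$, not the kernel vector.

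\textbf{How to close it.} An additional idea is needed.
\begin{enumerate}
\item[(i)] In finite dimensions you can use Rellich's theorem. The largest eigenvalue of the analytic Hermitian family $e^{-B^*t}e^{-Bt}$ is analytic on some $[0,\delta)$. Your two-sided bounds then force its first nonvanishing Taylor coefficient to sit at order $t^{2m+1}$.
\item[(ii)] In infinite dimensions the top of the spectrum need not be an isolated eigenvalue. You need a matching upper and lower variational argument with $t$-dependent trial vectors, or exactly the import from \cite[Th. 4.1]{AchAMN25} that the paper makes.
\end{enumerate}
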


Next we recall for the discrete-time dynamical system \eqref{theta2} the analogous definitions from \cite{AchAM23ELA, arnold2026connectionhypocoercivityhypocontractivitycayley}:
\begin{definition}
For $B\in\mathcal{B}(\HH)$ let $\se{-}\frac{1}{\theta\tau}\not\in \sigma(B)$ \se{if $\theta > 0$}, and hence $D\in\mathcal{B}(\HH)$.
\begin{enumerate}
    \item[(i)] $D$ is called \emph{contractive} if $\|D\|<1$. 
    \item[(ii)] $D$ is called \emph{semi-contractive} if $\|D\|\le1$. 
    \item[(iii)] $D$ is called \emph{hypocontractive} (or \emph{\se{exponentially} stable}) if there exist constants $C\ge1$ and $0<\lambda<1$ such that the solutions to the iteration \eqref{theta2} satisfy
    \begin{equation}
        \forall x_0\in\HH,\quad \forall k\in\N_0:\quad 
        \|x_k\| \le C \lambda^k \|x_0\|.
    \end{equation}
    \item[(iv)] Let $D$ be semi-contractive. Its \emph{hypocontractivity index (dHC-index)} $m_{dHC}$ is defined as the smallest integer $m\in\N_0$ (if it exists) such that 
    \begin{equation}
        \sum_{j=0}^m (D^*)^j (\Id-D^*D) D^j \ge \kappa \Id
    \end{equation}
    for some $\kappa>0$.
\end{enumerate}
\end{definition}

As a key result, this hypocontractivity index characterizes the short-time behavior of the iteration \eqref{theta2} (see \cite[Th. 40]{AchAM23ELA} for the finite dimensional case and \cite[Th. 4.5]{arnold2026connectionhypocoercivityhypocontractivitycayley} for the infinite dimensional case):
\begin{theorem}
Let $D\in\mathcal{B}(\HH)$ be semi-contractive and hypocontractive. Its (finite)  hypocontractivity index is $m_{dHC}\in\N_0$ if and only if
$$
  \|D^j\| = 1\: \mbox{ for all } j=1,...,m_{dHC},\quad \mbox{and } \|D^{m_{dHC}+1}\| < 1.
$$
\end{theorem}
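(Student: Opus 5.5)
Both implications reduce to an identity expressing $\|D^j x\|^2$ through the sum $\sum_{i=0}^{j-1}(D^*)^i(\Id-D^*D)D^i$. Set $P:=\Id-D^*D\ge0$, which holds since $D$ is semi-contractive. For every $x\in\HH$ and $n\in\N$ the sum telescopes:
\begin{equation*}
\|x\|^2-\|D^{n}x\|^2=\sum_{j=0}^{n-1}\big(\|D^jx\|^2-\|D^{j+1}x\|^2\big)=\Big\langle \sum_{j=0}^{n-1}(D^*)^jPD^j\,x,\,x\Big\rangle .
\end{equation*}
Write $S_n:=\sum_{j=0}^{n-1}(D^*)^jPD^j$. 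Then $\Id-(D^n)^*D^n=S_n$, so $\|D^n\|<1$ holds iff $S_n\ge\kappa\Id$ for some $\kappa>0$, and $\|D^n\|=1$ holds iff $S_n$ has no such positive lower bound. Here we use $\|D^n\|^2=\sup_{\|x\|=1}\langle (D^n)^*D^nx,x\rangle$.

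\emph{Forward direction.} Let $m:=m_{dHC}$. By minimality of $m$, $S_{j}$ is not bounded below by any positive multiple of $\Id$ for $j\le m$, so $\|D^j\|=1$ for $j=1,\dots,m$. For $m=0$ this statement is empty. By the definition of the index, $S_{m+1}\ge\kappa\Id$, hence $\|D^{m+1}\|^2\le 1-\kappa<1$.

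\emph{Reverse direction.} Suppose $\|D^j\|=1$ for $j\le m$ and $\|D^{m+1}\|<1$. Then $S_{m+1}=\Id-(D^{m+1})^*D^{m+1}\ge(1-\|D^{m+1}\|^2)\Id$ with a positive constant, so the index exists and is at most $m$. For any $j\le m$, $\|D^j\|=1$ means $S_j$ is not bounded below by a positive multiple of $\Id$. Hence the smallest admissible index is exactly $m$.

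The only point needing care is that in infinite dimensions $\|D^j\|=1$ need not be attained. It is therefore essential to phrase everything through the infimum of the quadratic form $\langle S_jx,x\rangle$ over unit vectors, rather than through kernels of $S_j$ as in the finite-dimensional proof. Once this is done, the argument needs neither hypocontractivity of $D$ nor any spectral information. Hypocontractivity only guarantees that some $\|D^n\|<1$, so that a finite index exists at all.
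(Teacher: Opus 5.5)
Your proof is correct and follows essentially the same route as the cited result, which the paper quotes without reproving: the telescoping identity $\sum_{j=0}^{m}(D^*)^j(\Id-D^*D)D^j=\Id-(D^{m+1})^*D^{m+1}$, combined with the characterization in Lemma~\ref{contractivity_rewrite} (which the paper says is taken from the proof of that result), shows that index $m$ is admissible exactly when $\|D^{m+1}\|<1$. You are also right that hypocontractivity is only needed to guarantee that the index is finite.
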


For the midpoint rule, i.e.\ \eqref{theta2} with $\theta=\frac12$, the hypocoercivity properties of $B$ are closely related to the hypocontractivity properties of $D$ (see \cite[\S4]{AchAM23ELA} for the finite dimensional case and \cite[\S4]{arnold2026connectionhypocoercivityhypocontractivitycayley} for the infinite dimensional case):
\begin{theorem}\label{th1.5}
Let $B,\,D\in\mathcal{B}(\HH)$ be well-defined and related via \eqref{theta2}. Then:
\begin{itemize}
    \item[(i)] $B$ is hypocoercive if and only if $D$ is hypocontractive.
    \item[(ii)] $-B$ is semi-dissipative if and only if $D$ is semi-contractive.
    \item[(iii)] Let $B$ be hypocoercive and $-B$ be semi-dissipative. Then, $m_{HC}(B)=m_{dHC}(D)$.
\end{itemize}
\end{theorem}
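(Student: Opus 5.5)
The key tool will be the Cayley-transform structure of the midpoint rule. For $\theta=\frac12$ we have $D=(\Id+\frac\tau2B)^{-1}(\Id-\frac\tau2B)$. Setting $A:=\frac\tau2 B$, this becomes $D=(\Id+A)^{-1}(\Id-A)$, and inversely $A=(\Id+D)^{-1}(\Id-D)$ whenever $-1\notin\sigma(D)$. Since $\tau>0$ only rescales, it does not affect any of the properties in question: hypocoercivity, semi-dissipativity, or the HC-index. So I will work with $A$ and $D$ throughout.

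\textbf{Step 1, item (ii).} This should follow from the algebraic identity
\[
\Id-D^*D=(\Id+A^*)^{-1}\big[(\Id+A^*)(\Id+A)-(\Id-A^*)(\Id-A)\big](\Id+A)^{-1}=4\,(\Id+A^*)^{-1}A_H(\Id+A)^{-1}.
\]
Because $(\Id+A)^{-1}$ is a bounded bijection, $\Id-D^*D\ge0$ holds iff $A_H\ge0$. For the converse direction, I first need that $-1\notin\sigma(D)$ when $D$ is semi-contractive and well-defined; this follows from $D$ being the image of $A$ under the Möbius map.

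\textbf{Step 2, item (i).} I will use spectral characterizations. For bounded $B$, hypocoercivity is equivalent to $\sigma(B)\subset\{\Re z>0\}$: uniform exponential stability of $e^{-Bt}$ is equivalent to the spectral bound being negative, since the spectral mapping theorem holds for bounded generators. Hypocontractivity of $D$ is equivalent to $r(D)<1$, via the Gelfand formula. The spectral mapping theorem for the Möbius function $z\mapsto (1-z)/(1+z)$ gives $\sigma(D)=\{(1-z)/(1+z):z\in\sigma(A)\}$. This map sends the open right half-plane onto the open unit disk, and since $\sigma(A)$ is compact, $\Re z>0$ on $\sigma(A)$ is equivalent to $\sigma(D)$ lying in the open disk. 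Hence the two properties coincide.

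\textbf{Step 3, item (iii), the main obstacle.} The plan is to show that the two index sums are comparable up to invertible congruences. Write $R:=(\Id+A)^{-1}$, so that $D=R(\Id-A)=2R-\Id$. From Step 1,
\[
(D^*)^j(\Id-D^*D)D^j=4\,(D^*)^jR^*A_HRD^j.
\]
The operators $RD^j$ are polynomials in $R$ of degree $j+1$, and $R$ commutes with $A$. Using $A=R^{-1}-\Id$, the span of $\{A^iR^{m+1}\}_{i\le m}$ equals the span of $\{R\,D^j\}_{j\le m}$ modulo invertible triangular recombinations. Concretely, $RD^j=R(2R-\Id)^j$ and $A^iR^{m+1}=R^{m+1}(\Id-R)^iR^{-i}$; both families span the polynomials in $R$ with a factor $R$ and degree at most $m+1$.

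I then use the standard lemma: if $\{F_j\}$ and $\{G_i\}$ are related by an invertible matrix of scalars, $G_i=\sum_j c_{ij}F_j$, then $\sum_j F_j^*A_HF_j\ge\kappa\Id$ iff $\sum_i G_i^*A_HG_i\ge\kappa'\Id$. This holds because both sums are equivalent to $\sum_j\|A_H^{1/2}F_jx\|^2\ge\kappa\|x\|^2$, and an invertible linear recombination changes such a sum of squares only by bounded factors.

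This gives
\[
\sum_{j\le m}(D^*)^j(\Id-D^*D)D^j\ge\kappa\Id
\iff \sum_{i\le m}(R^{m+1})^*(A^*)^iA_HA^iR^{m+1}\ge\kappa'\Id .
\]
Since $R^{m+1}$ is boundedly invertible, the right-hand condition is equivalent to $\sum_{i\le m}(A^*)^iA_HA^i\ge\kappa''\Id$. Therefore the minimal $m$ coincides in both cases, which gives $m_{HC}(B)=m_{dHC}(D)$.

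The delicate points are the exact bookkeeping of the triangular change of basis between $\{R(2R-\Id)^j\}$ and $\{R^{m+1-i}(\Id-R)^i\}$, and checking that its coefficient matrix is invertible. I will verify the latter through the leading coefficients in a suitable monomial basis in $R$.
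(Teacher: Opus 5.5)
Your proof is correct. For (i) and (ii) it uses exactly the tools the paper itself relies on. Your identity $\Id-D^*D=4(\Id+A^*)^{-1}A_H(\Id+A)^{-1}$ is Lemma~\ref{positivity_rewrite} at $\theta=\frac12$, where the $A^*A$-term drops out. Your argument for (i) is the spectral characterization recalled in \S\ref{sec:prelim} combined with spectral mapping for the M\"obius map, as in the remarks following Theorem~\ref{expl_results}.

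For (iii), the paper gives no argument of its own: Theorem~\ref{th1.5} is quoted from the earlier Cayley-transform work. So your Step~3 is a self-contained proof rather than a reproduction of one. You write both index sums as $\sum F^*A_HF$ over two bases of $\Span\{R,R^2,\dots,R^{m+1}\}$, compare them via an invertible scalar matrix, and remove the common factor $R^{m+1}$ by congruence. This is sound, and it proves slightly more than stated. It uses only $A_H\ge0$ and $-1\notin\sigma(A)$, never hypocoercivity. Hence for every $m$ the two coercivity conditions are equivalent, and the two indices are simultaneously finite or infinite.

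Two small points for the write-up:
\begin{itemize}
\item The operators $R^{m+1-i}(\Id-R)^i$, $0\le i\le m$, all have degree $m+1$ in $R$, so invertibility of the change of basis is not visible from leading coefficients. Read it off from the lowest-order terms $R^{m+1-i}$, which are distinct. Equivalently, $\{x^{m-i}(1-x)^i\}_{i=0}^m$ is a rescaled Bernstein basis of the polynomials of degree at most $m$. The other family, $\{R(2R-\Id)^j\}$, is triangular by degree.
\item The condition $-1\notin\sigma(D)$ in Step~1 needs no argument, since $\Id+D=2R$ is always invertible. In any case the converse direction of (ii) follows directly from the congruence identity, which holds whenever $D$ is defined.
\end{itemize}
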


Here we shall discuss the analogous relation of $B$ and $D$ for $\theta\ne\frac12$. As we shall see in \S\ref{sec:theta}, the clear-cut relation from $\theta=\frac12$ does not extend to the cases $\theta\ne\frac12$. 
In \S\ref{sec:ex} we shall illustrate our results on some examples (explicit and implicit Euler schemes).


\section{Notation and preliminaries}\label{sec:prelim}
Throughout this article, $\HH$ denotes a (possibly infinite dimensional) separable Hilbert space with corresponding inner product $\langle \cdot, \cdot \rangle$ and norm $\Vert \cdot \Vert$. The set of bounded operators on $\HH$ is denoted by $\mathcal{B}(\HH)$ with the operator norm also denoted by $\|\cdot\|$, and the identity operator on $\HH$ is denoted by $\Id$. For an operator $A \in \mathcal{B}(\HH)$, its spectrum is denoted by $\sigma(A)$, its adjoint by $A^*$ and its Hermitian part by $A_H \coloneqq \frac{A + A^*}{2}$. We further denote the unit sphere in $\HH$ by $\SSS \coloneqq \{x \in \HH: \Vert x \Vert = 1\}$, the complex left half-plane by $\HHH \coloneqq \{z \in \C: \Re(z) < 0\}$, and the open unit disk by $\D \coloneqq \{z \in \C: \lvert z \rvert < 1\}$.

For the continuous-time case we recall that an operator $B\in\mathcal{B}(\HH)$ is hypocoercive if and only if $\sigma(-B)\subset\HHH$, see \cite[Th. I.3.14]{engel_nagel}, \cite[Th. 4.1]{arnold2026connectionhypocoercivityhypocontractivitycayley}, e.g.\se{, and for the discrete-time case we recall that an operator $D \in \mathcal{B}(\HH)$ is hypocontractive if and only if $\sigma(D) \subset \D$, see \cite[Proposition II. 1.3]{eisner_operator_semigroups}, e.g.}


\section{$\theta$--methods}\label{sec:theta}

For our subsequent analysis of the iteration \eqref{theta2} we first formalize the definition of $D=M_{\theta,\tau}(-B)$, where the map $M_{\theta,\tau}$ is defined as follows:
\begin{defs}
Let $0 \leq \theta \leq 1$ and $\tau > 0$. We consider the following Möbius transformations
\begin{align*}
	M_{\theta,\tau} \colon \C \setminus \left\{ \frac{1}{\theta\tau} \right\} & \longrightarrow \C, \\
	z & \longmapsto \frac{1+(1-\theta)\tau z}{1-\theta \tau z}.
\end{align*}
\end{defs}

The following lemma, needed in the subsequent analysis, follows from the proof of \cite[Theorem 4.5]{arnold2026connectionhypocoercivityhypocontractivitycayley}. We include the proof here for completeness. 

\begin{lem}\label{contractivity_rewrite}
Let $A \in \mathcal{B}(\HH)$. Then
\begin{enumerate}
\item[(i)] $A$ is semi-contractive if and only if $\Id - A^*A$ is positive semidefinite.
\item[(ii)] $A$ is contractive if and only if $\Id - A^*A$ is coercive.
\end{enumerate}
\end{lem}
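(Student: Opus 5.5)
The plan is to rewrite both norm conditions as inequalities between quadratic forms on the unit sphere $\SSS$. The one identity that drives everything is
\[
\langle (\Id - A^*A)x, x\rangle = \|x\|^2 - \|Ax\|^2 \qquad \mbox{for all } x\in\HH,
\]
which follows from $\langle A^*Ax,x\rangle=\langle Ax,Ax\rangle$. Since $\Id-A^*A$ is self-adjoint, both positive semidefiniteness and coercivity (that is, $\Id-A^*A\ge\kappa\Id$ for some $\kappa>0$) are statements about this quadratic form. Each part then reduces to comparing $\|Ax\|^2$ with $\|x\|^2$.

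For (i), semi-contractivity means $\|A\|\le1$. This is equivalent to $\|Ax\|^2\le\|x\|^2$ for all $x\in\HH$, by the definition of the operator norm as $\sup_{x\in\SSS}\|Ax\|$ together with homogeneity. By the identity above, this is exactly the condition $\langle(\Id-A^*A)x,x\rangle\ge0$ for all $x$, i.e.\ positive semidefiniteness.

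For (ii), I will prove the two directions separately.
\begin{itemize}
\item[($\Rightarrow$)] If $\|A\|<1$, then for every $x$ we have $\|Ax\|^2\le\|A\|^2\|x\|^2$. Hence $\langle(\Id-A^*A)x,x\rangle\ge(1-\|A\|^2)\|x\|^2$, so coercivity holds with $\kappa=1-\|A\|^2>0$.
\item[($\Leftarrow$)] If $\Id-A^*A\ge\kappa\Id$ with $\kappa>0$, then $\|Ax\|^2\le(1-\kappa)\|x\|^2$ for all $x$. Taking the supremum over $\SSS$ gives $\|A\|\le\sqrt{1-\kappa}<1$. This step also forces $\kappa\le1$ automatically, because $\|Ax\|^2\ge0$.
\end{itemize}

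There is no real obstacle here. The only point needing care is that in the infinite-dimensional setting the strict inequality $\|Ax\|<\|x\|$ for each individual $x$ does not by itself give $\|A\|<1$, since the supremum over $\SSS$ need not be attained. That is why (ii) must be phrased with coercivity, i.e.\ a uniform lower bound $\kappa$, rather than mere positive definiteness, and the argument above uses this uniform constant explicitly in both directions.
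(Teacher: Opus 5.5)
Your proof is correct and follows essentially the same route as the paper: both rest on the identity $\langle(\Id-A^*A)x,x\rangle=\|x\|^2-\|Ax\|^2$ and compare it with the operator norm. The paper packages both parts into the single equivalence $\Id-A^*A\ge\lambda\Id \iff \|A\|^2\le 1-\lambda$ for fixed $\lambda\ge0$, taking $\lambda=0$ for (i) and $\lambda>0$ for (ii); your two directions in (ii) unpack exactly this with explicit constants.
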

\begin{proof}
For any fixed $\lambda \geq 0$ it holds:
\begin{align*}
&\Id - A^*A \geq \lambda \Id \\
\iff &\text{for all } x \in \HH: \langle (\Id - A^*A)x,x \rangle \geq \lambda \Vert x \Vert^2 \\
\iff &\text{for all } x \in \HH: \Vert x \Vert^2 - \Vert Ax \Vert^2 \geq \lambda \Vert x \Vert^2 \\
\iff &1-\lambda \geq \Vert A \Vert^2.
\end{align*}
The first claim follows from the case $\lambda=0$ in the calculation above and the second claim follows from the case $\lambda > 0$.
\end{proof}

Using the preceding lemma we can characterize (semi-)contractivity of the discrete iteration operator $M_{\theta,\tau}(A)$ in a way which will be useful for our analysis.

\begin{lem}\label{positivity_rewrite}
Let $A \in \mathcal{B}(\HH)$ \se{and $\frac{1}{\theta \tau} \notin \sigma(A)$ if $\theta > 0$}. Then
\begin{enumerate}
\item[(i)] $M_{\theta,\tau}(A)$ is semi-contractive if and only if $-2\tau A_H + \tau^2(2\theta-1)A^*A \geq 0$.
\item[(ii)] $M_{\theta,\tau}(A)$ is contractive if and only if $-2\tau A_H + \tau^2(2\theta-1)A^*A \geq \lambda \Id$ for some $\lambda > 0$.
\end{enumerate}
\end{lem}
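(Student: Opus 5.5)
The plan is to reduce both claims to Lemma~\ref{contractivity_rewrite} applied to $D:=M_{\theta,\tau}(A)=(\Id-\theta\tau A)^{-1}(\Id+(1-\theta)\tau A)$. This operator is well defined because $\Id-\theta\tau A$ is boundedly invertible: for $\theta>0$ this follows from $\frac{1}{\theta\tau}\notin\sigma(A)$, and for $\theta=0$ the inverse is just $\Id$. Write $R:=(\Id-\theta\tau A)^{-1}$ and $N:=\Id+(1-\theta)\tau A$, so that $D=RN$.

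By Lemma~\ref{contractivity_rewrite}, $D$ is semi-contractive (resp.\ contractive) if and only if $\|y\|^2-\|Dy\|^2\ge 0$ (resp.\ $\ge\mu\|y\|^2$ for some $\mu>0$) for all $y\in\HH$. Parametrize $y=(\Id-\theta\tau A)x$, which is a bijection of $\HH$. Then $Dy=Nx$, and the quadratic form becomes
\[
\|(\Id-\theta\tau A)x\|^2-\|(\Id+(1-\theta)\tau A)x\|^2 .
\]
Expanding both squares, the $\|x\|^2$ terms cancel. The linear terms give $-2\theta\tau\Re\langle Ax,x\rangle-2(1-\theta)\tau\Re\langle Ax,x\rangle=-2\tau\langle A_Hx,x\rangle$. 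The quadratic terms give $(\theta^2-(1-\theta)^2)\tau^2\|Ax\|^2=(2\theta-1)\tau^2\|Ax\|^2$. Hence
\[
\|y\|^2-\|Dy\|^2=\big\langle\big(-2\tau A_H+\tau^2(2\theta-1)A^*A\big)x,x\big\rangle=:\langle Tx,x\rangle .
\]

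Part (i) follows immediately, since $y$ ranges over all of $\HH$ exactly when $x$ does.

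Part (ii) is the only delicate step, because the coercivity constants must transfer between the two norms $\|y\|$ and $\|x\|$. This uses boundedness and invertibility of $\Id-\theta\tau A$:
\[
\|R\|^{-1}\|x\|\le\|y\|\le\|\Id-\theta\tau A\|\,\|x\| .
\]
If $\|y\|^2-\|Dy\|^2\ge\mu\|y\|^2$, then $\langle Tx,x\rangle\ge\mu\|R\|^{-2}\|x\|^2$, so $T\ge\lambda\Id$ with $\lambda=\mu\|R\|^{-2}>0$. Conversely, if $T\ge\lambda\Id$, then $\|y\|^2-\|Dy\|^2\ge\lambda\|\Id-\theta\tau A\|^{-2}\|y\|^2$. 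Note that $\|\Id-\theta\tau A\|>0$ because this operator is invertible. Applying Lemma~\ref{contractivity_rewrite}(ii) then concludes the proof.
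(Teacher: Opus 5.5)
Your proof is correct and is the paper's argument in quadratic-form language. The substitution $y=(\Id-\theta\tau A)x$ is exactly the paper's congruence of $\Id-M_{\theta,\tau}(A)^*M_{\theta,\tau}(A)$ by $(\Id-\theta\tau A)^{-1}$, and your bounds $\|R\|^{-1}\|x\|\le\|y\|\le\|\Id-\theta\tau A\|\,\|x\|$ just spell out the congruence-invariance of coercivity that the paper asserts without detail. (Writing $Dy=Nx$ tacitly uses that $R$ and $N$ commute, which holds since both are functions of $A$; the paper relies on the same fact.)
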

\begin{proof}
As $\frac{1}{\theta \tau} \notin \sigma(A)$, $M_{\theta,\tau}(A)$ is well-defined as an element of $\mathcal{B}(\HH)$. Moreover, semi-contractivity (resp. contractivity) of $M_{\theta,\tau}(A)$ is equivalent to positive semidefiniteness (resp. coercivity) of $\Id - M_{\theta,\tau}(A)^*M_{\theta,\tau}(A)$ due to Lemma \ref{contractivity_rewrite}. We focus on the latter characterization in the following.
\begin{align*}
&\Id - M_{\theta,\tau}(A)^* M_{\theta,\tau}(A) \\
&= \Id - (\Id - \theta \tau A^*)^{-1} \big( \Id + (1 - \theta)\tau A^* \big)  \big( \Id + (1 - \theta)\tau A \big) (\Id - \theta \tau A)^{-1} \\
&= (\Id - \theta \tau A^*)^{-1} \big( (\Id - \theta \tau A^*)(\Id - \theta \tau A) - ( \Id + (1 - \theta)\tau A^* ) ( \Id + (1 - \theta)\tau A ) \big) (\Id - \theta \tau A)^{-1},
\end{align*}
which shows that $\Id - M_{\theta,\tau}(A)^* M_{\theta,\tau}(A)$ and $(\Id - \theta \tau A^*)(\Id - \theta \tau A) - ( \Id + (1 - \theta)\tau A^* ) ( \Id + (1 - \theta)\tau A )$ are conjugated to each other by an invertible operator. As being positive semidefinite and being coercive is invariant under such a transformation, both results follow from the equality
\begin{align*}
&(\Id - \theta \tau A^*)(\Id - \theta \tau A) - ( \Id + (1 - \theta)\tau A^* ) ( \Id + (1 - \theta)\tau A ) \\
&= (\Id - \theta \tau A^*)(\Id - \theta \tau A) - ( \Id - \theta\tau A^* + \tau A^* ) ( \Id - \theta \tau A + \tau A ) \\
&= -\tau A^* + \theta \tau^2 A^* A - \tau A + \theta \tau^2 A^* A - \tau^2 A^* A \\
&= -2\tau A_H + \tau^2(2\theta-1)A^*A.
\end{align*}
\end{proof}

We will also need the following result about hypocoercive operators.

\begin{lem}\label{b_star_b_coercive}
Let $B \in \mathcal{B}(\HH)$ be hypocoercive. Then $B^*B$ is bounded and coercive, i.e. there exist constants $\lambda \geq \mu > 0$ such that $\lambda \Id \geq B^* B \geq \mu \Id$.
\end{lem}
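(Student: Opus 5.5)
The plan is to obtain the upper bound from boundedness of $B$ and the lower bound from invertibility of $B$, which in turn follows from hypocoercivity via the spectral characterization recalled in \S\ref{sec:prelim}.

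\emph{Upper bound.} Since $B\in\mathcal{B}(\HH)$, for all $x\in\HH$ we have
$$\langle B^*Bx,x\rangle=\|Bx\|^2\le\|B\|^2\|x\|^2 .$$
Hence $B^*B\le\lambda\Id$ with $\lambda:=\|B\|^2$, and $B^*B$ is bounded.

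\emph{Lower bound.} By \S\ref{sec:prelim}, hypocoercivity of $B$ is equivalent to $\sigma(-B)\subset\HHH$. In particular $0\notin\sigma(B)$, so $B$ has a bounded inverse $B^{-1}\in\mathcal{B}(\HH)$. Then for every $x\in\HH$,
$$\|x\|=\|B^{-1}Bx\|\le\|B^{-1}\|\,\|Bx\| .$$
Squaring gives
$$\langle B^*Bx,x\rangle=\|Bx\|^2\ge\|B^{-1}\|^{-2}\|x\|^2 .$$
So $B^*B\ge\mu\Id$ with $\mu:=\|B^{-1}\|^{-2}>0$.

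\emph{Ordering of the constants.} Since $\mu\Id\le B^*B\le\lambda\Id$, we get $\mu\le\lambda$ by testing on any unit vector. (If $\HH=\{0\}$ the claim is trivial; otherwise also $1=\|BB^{-1}\|\le\|B\|\|B^{-1}\|$ shows $\mu\le\lambda$ directly.)

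The only step with real content is invertibility of $B$. It is supplied entirely by the cited spectral characterization of hypocoercivity, so there is no genuine obstacle. If one wanted to avoid that citation, an alternative is to note that the exponential decay of $e^{-Bt}$ makes
$$\int_0^\infty e^{-Bt}\,dt$$
converge in $\mathcal{B}(\HH)$, and this integral equals $B^{-1}$.
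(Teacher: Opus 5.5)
Your proof is correct and follows essentially the same route as the paper: you get invertibility of $B$ from $\sigma(-B)\subset\HHH$, and then $\|x\|\le\|B^{-1}\|\,\|Bx\|$ gives $B^*B\ge\|B^{-1}\|^{-2}\Id$. You additionally spell out the upper bound $\lambda=\|B\|^2$ and the ordering $\mu\le\lambda$, which the paper leaves implicit as the trivial part.
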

\begin{proof}
The only nontrivial aspect to show is coercivity of $B^*B$. Following a remark in \S\ref{sec:prelim}, it holds that $\sigma(-B) \subset \HHH$. In particular, $0 \notin \sigma(-B)=-\sigma(B)$ and so $B$ is invertible. Consequently, for any $x \in \HH$ we have
\begin{align*}
\Vert x \Vert = \Vert B^{-1} B x \Vert \leq \Vert B^{-1} \Vert \Vert Bx \Vert
\end{align*}
and hence
\begin{align*}
\langle B^*Bx,x \rangle = \langle Bx,Bx \rangle = \Vert Bx \Vert^2 \geq \Vert B^{-1} \Vert^{-2} \Vert x \Vert^2.
\end{align*}
\end{proof}

After these auxiliary results we are ready to investigate how the hypocoercivity structure of $B$ behaves under discretization using the $\theta$-methods for $\theta \neq \frac{1}{2}$. To this end, we distinguish the cases $\theta < \frac{1}{2}$ and $\theta > \frac{1}{2}$.

\begin{thm}\label{impl_results}
Let $\frac{1}{2} < \theta \leq 1$, let $B \in \mathcal{B}(\HH)$ be hypocoercive and $-B$ be semi-dissipative. Then $M_{\theta,\tau}(-B)$ is contractive for any $\tau>0$, i.e. it is hypocontractive with hypocontractivity index $0$.
\end{thm}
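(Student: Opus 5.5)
The plan is to apply Lemma \ref{positivity_rewrite} with $A=-B$. First I will check well-definedness. Since $B$ is hypocoercive, $\sigma(-B)\subset\HHH$. The point $\frac{1}{\theta\tau}$ is a positive real number, so $\frac{1}{\theta\tau}\notin\sigma(-B)$ for every $\tau>0$ and every $\theta\in(\frac12,1]$. Hence $M_{\theta,\tau}(-B)=D\in\mathcal{B}(\HH)$.

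By Lemma \ref{positivity_rewrite}(ii), contractivity of $M_{\theta,\tau}(-B)$ is equivalent to the existence of some $\lambda>0$ with
\begin{equation*}
  -2\tau(-B)_H+\tau^2(2\theta-1)(-B)^*(-B) = 2\tau B_H+\tau^2(2\theta-1)B^*B \ \ge\ \lambda \Id .
\end{equation*}
I will bound the two terms separately:
\begin{itemize}
\item The first term satisfies $2\tau B_H\ge0$, because $-B$ is semi-dissipative.
\item For the second term, Lemma \ref{b_star_b_coercive} gives $\mu>0$ with $B^*B\ge\mu\Id$. Since $2\theta-1>0$ and $\tau>0$, we get $\tau^2(2\theta-1)B^*B\ge\tau^2(2\theta-1)\mu\Id$.
\end{itemize}
Adding the two bounds, the sum is at least $\lambda\Id$ with $\lambda:=\tau^2(2\theta-1)\mu>0$. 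Therefore $\|M_{\theta,\tau}(-B)\|<1$.

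Finally, I will identify the hypocontractivity index. Contractivity gives $\|D^k\|\le\|D\|^k$, so $D$ is hypocontractive with $C=1$ and rate $\|D\|$. For the index, Lemma \ref{contractivity_rewrite}(ii) shows that $\Id-D^*D$ is coercive, so the sum with $m=0$ already satisfies the defining inequality. Thus $m_{dHC}=0$.

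The argument has no real obstacle. The one essential ingredient is the coercivity of $B^*B$, which rests on the invertibility of $B$. The point to be careful about is that hypocoercivity alone, without $m_{HC}=0$, gives no lower bound on $B_H$. It is precisely the strictly positive coefficient $2\theta-1$ that makes the $B^*B$ term supply coercivity. This also explains why the argument fails for $\theta=\frac12$.
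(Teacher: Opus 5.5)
Your proposal is correct and matches the paper's proof: both apply Lemma \ref{positivity_rewrite}(ii) with $A=-B$, and both get coercivity of $2\tau B_H+\tau^2(2\theta-1)B^*B$ from $B_H\ge0$, $2\theta-1>0$ and Lemma \ref{b_star_b_coercive}. Your extra checks are correct details the paper leaves implicit: well-definedness via $\frac{1}{\theta\tau}\notin\sigma(-B)\subset\HHH$, and $m_{dHC}=0$ via Lemma \ref{contractivity_rewrite}(ii).
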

\begin{proof}
Lemma \ref{positivity_rewrite} yields that $M_{\theta,\tau}(-B)$ is contractive if and only if $2\tau B_H + \tau^2(2\theta-1)B^*B$ is coercive. Yet, the latter is clear because $B_H \geq 0$, $2\theta - 1 > 0$, and Lemma \ref{b_star_b_coercive} yields that $B^*B \geq \mu \Id$ for some $\mu > 0$.
\end{proof}

\begin{thm}\label{expl_results}
Let $0 \leq \theta < \frac{1}{2}$ and let $B \in \mathcal{B}(\HH)$ be hypocoercive \aaa{and $-B$ be semi-dissipative}.
\begin{enumerate}
\item[(i)] If $B$ has hypocoercivity index $0$ (i.e.\ $B$ is coercive), then the set of step sizes $\tau > 0$ such that $M_{\theta,\tau}(-B)$ is semi-contractive has the form $(0,\tau_0]$ for some $\tau_0 > 0$. Moreover, $M_{\theta,\tau}(-B)$ is even contractive, i.e. it is hypocontractive with hypocontractivity index $0$, for $0 < \tau < \tau_0$. \aaa{At the endpoint it always satisfies}\se{ $\Vert M_{\theta,\tau_0}(-B) \Vert = 1.$}
\item[(ii)] If $B$ has hypocoercivity index greater than $0$, then $M_{\theta,\tau}(-B)$ is not semi-contractive for any $\tau > 0$.
\end{enumerate}
\end{thm}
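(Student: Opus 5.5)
By Lemma \ref{positivity_rewrite} with $A=-B$ (note $\frac{1}{\theta\tau}\notin\sigma(-B)$ holds automatically, since $\sigma(-B)\subset\HHH$ by hypocoercivity), semi-contractivity of $M_{\theta,\tau}(-B)$ is equivalent to
\[
Q(\tau):=2\tau B_H - \tau^2(1-2\theta)B^*B \ge 0 ,
\]
and contractivity is equivalent to coercivity of $Q(\tau)$. Since $\tau>0$, I would divide by $\tau$ and work with $P(\tau):=2B_H-\tau c\,B^*B$, where $c:=1-2\theta>0$. This is a decreasing (in the Loewner order) affine family of self-adjoint operators in $\tau$, and the whole proof reduces to studying it.

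For (i), coercivity of $B$ gives $B_H\ge\kappa\Id$ with $\kappa>0$, and Lemma \ref{b_star_b_coercive} gives $\mu\Id\le B^*B\le\Lambda\Id$ with $\mu>0$. I would set
\[
\tau_0:=\sup\{\tau>0:\ P(\tau)\ge0\}=\inf_{x\in\SSS}\frac{2\langle B_Hx,x\rangle}{c\|Bx\|^2}.
\]
The second expression shows $\tau_0\ge 2\kappa/(c\Lambda)>0$ and $\tau_0\le 2\|B_H\|/(c\mu)<\infty$. Monotonicity of $\tau\mapsto P(\tau)$ shows that the admissible set is an interval starting at $0$. Closedness at $\tau_0$ follows because $P(\tau_0)\ge0$ is a pointwise limit of the inequalities $\langle P(\tau)x,x\rangle\ge0$ for $\tau<\tau_0$. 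For $0<\tau<\tau_0$ I would write
\[
P(\tau)=\tfrac{\tau}{\tau_0}P(\tau_0)+\bigl(1-\tfrac{\tau}{\tau_0}\bigr)2B_H\ge \bigl(1-\tfrac{\tau}{\tau_0}\bigr)2\kappa\Id,
\]
which is coercive, so $M_{\theta,\tau}(-B)$ is contractive there.

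For the endpoint claim, $P(\tau_0)$ cannot be coercive. If $P(\tau_0)\ge\epsilon\Id$, then $P(\tau_0+\delta)\ge(\epsilon-\delta c\Lambda)\Id\ge0$ for small $\delta>0$, contradicting the definition of $\tau_0$. By Lemma \ref{positivity_rewrite}(ii), $M_{\theta,\tau_0}(-B)$ is therefore semi-contractive but not contractive, i.e.\ $\|M_{\theta,\tau_0}(-B)\|=1$. In infinite dimensions the infimum need not be attained, but this argument avoids needing attainment.

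For (ii), $m_{HC}>0$ means $B_H$ is not coercive, so there are unit vectors $x_n$ with $\langle B_Hx_n,x_n\rangle\to0$. By Lemma \ref{b_star_b_coercive}, $\|Bx_n\|^2\ge\mu$, hence
\[
\langle P(\tau)x_n,x_n\rangle\le 2\langle B_Hx_n,x_n\rangle-\tau c\mu<0
\]
for large $n$, for every $\tau>0$. Thus $M_{\theta,\tau}(-B)$ is never semi-contractive.

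The main obstacle is the endpoint statement in (i). The delicate point is showing both that the admissible set is closed, so the supremum belongs to it, and that contractivity fails exactly at $\tau_0$ without assuming the infimum is attained. Both are handled by the monotone affine structure of $P(\tau)$ in $\tau$ together with the uniform upper bound $B^*B\le\Lambda\Id$.
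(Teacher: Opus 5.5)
Your proof is correct and follows the paper's route: Lemma~\ref{positivity_rewrite} reduces semi-contractivity to $2B_H\ge\tau(1-2\theta)B^*B$, you use monotonicity and closedness of the admissible set in $\tau$, and (ii) uses the same sequence argument with $\langle B_Hx_n,x_n\rangle\to0$ and $B^*B\ge\mu\Id$. The differences are minor: you explicitly show $\tau_0\le 2\|B_H\|/((1-2\theta)\mu)<\infty$, which the paper leaves implicit although it is needed for the form $(0,\tau_0]$; you obtain $\|M_{\theta,\tau_0}(-B)\|=1$ by showing that $2B_H-\tau_0(1-2\theta)B^*B$ cannot be coercive (using $B^*B\le\Lambda\Id$) instead of the paper's appeal to continuity of $\tau\mapsto\|M_{\theta,\tau}(-B)\|$; and your convex-combination bound for $\tau<\tau_0$ uses $B_H\ge\kappa\Id$ where the paper uses $B^*B\ge\mu\Id$.
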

\begin{proof}
We first note that Lemma \ref{positivity_rewrite} yields that $M_{\theta,\tau}(-B)$ is semi-contractive if and only if
\begin{align*}
2\tau B_H + \tau^2(2\theta-1)B^*B \geq 0,
\end{align*}
and the latter is true if and only if 
\begin{align}\label{eq:semi-contractivity_condition}
2 B_H \geq \tau(1-2\theta)B^*B.
\end{align}
Thus, the set of all $\tau > 0$ such that $M_{\theta,\tau}(-B)$ is semi-contractive can be written equivalently as 
\begin{align*}
\mathcal{T} \coloneqq \{ \tau > 0: 2 B_H \geq \tau(1-2\theta)B^*B \}.    
\end{align*}
\begin{enumerate}
\item[(i)] 
Concerning the first part, $B$ has hypocoercivity index $0$, so $B_H$ is coercive, meaning $B_H \geq \kappa \Id$ for some $\kappa > 0$. Moreover, $B$ is hypocoercive, so Lemma \ref{b_star_b_coercive} yields $\lambda \Id \geq B^*B \geq \mu \Id$ for some $\lambda \geq \mu > 0$. Consequently, for $0 < \tau \leq \frac{2\kappa}{(1-2\theta)\lambda}$ we have
\begin{align*}
2 B_H \geq 2 \kappa \Id \geq \tau(1-2\theta)\lambda \Id \geq \tau(1-2\theta)B^*B,
\end{align*}
which shows that $\mathcal{T}$ is nonempty. Furthermore, let $\tilde{\tau} \in \mathcal{T}$ and $0 < \tau < \tilde{\tau}$. Then, using $B^*B \geq 0$, we deduce
\begin{align*}
2 B_H \geq \tilde{\tau}(1-2\theta)B^*B = \tau(1-2\theta)B^*B + (\tilde{\tau} - \tau)(1-2\theta)B^*B \geq \tau(1-2\theta)B^*B.
\end{align*}
Thus, $\tau \in \mathcal{T}$. Finally, let $\tau_n \in \mathcal{T}$ for all $n \in \N$ and $\tau_n \xrightarrow{n \to \infty} \tau$. Then for any $x \in \HH$ it holds
\begin{align*}
\langle 2B_H x, x \rangle \geq \langle \tau_n(1-2\theta)B^*B x, x \rangle.
\end{align*}
Taking the limit $n \to \infty$ we deduce
\begin{align*}
\langle 2B_H x, x \rangle \geq \langle \tau(1-2\theta)B^*B x, x \rangle ,
\end{align*}
and so $\tau \in \mathcal{T}$. Thus, the set of $\tau > 0$ such that $M_{\theta,\tau}(-B)$ is semi-contractive has the form $(0,\tau_0]$ for some $\tau_0 > 0$. Concerning the 
\aaa{contractivity statement}, Lemma \ref{positivity_rewrite}(ii) shows that
$M_{\theta,\tau}(-B)$ is contractive if and only if $2 B_H - \tau(1-2\theta)B^*B$ is coercive. The latter follows directly from semi-contractivity of $M_{\theta,\tau_0}(-B)$ (which implies that \eqref{eq:semi-contractivity_condition} holds for $\tau = \tau_0$) and the fact that $B^*B \geq \mu \Id$:
\begin{align*}
2 B_H - \tau(1-2\theta)B^*B &= 2 B_H - \tau_0(1-2\theta)B^*B + (\tau_0 - \tau)(1-2\theta)B^*B \geq (\tau_0 - \tau)(1-2\theta)B^*B \\
&\geq (\tau_0 - \tau)(1-2\theta) \mu \Id.
\end{align*}
\aaa{The last claim follows from continuity of $\Vert M_{\theta,\tau}(-B) \Vert$ w.r.t.\ $\tau$.}
\item[(ii)] Concerning the second part, we note that $B$ having hypocoercivity index greater than $0$ implies the existence of a sequence $\{x_n\} \subset \mathbb{S}$ such that $\langle B_H x_n,x_n \rangle \xrightarrow{n \to \infty} 0$. Together with the fact that $B^*B \geq \mu \Id$ for some $\mu > 0$ (which again follows from Lemma \ref{b_star_b_coercive}) we deduce
\begin{align*}
\langle (2B_H - \tau(1-2\theta)B^*B)x_n,x_n \rangle &= 2\langle B_H x_n,x_n \rangle - \tau(1-2\theta) \langle B^*B x_n, x_n \rangle \\
&\leq \langle 2B_H x_n,x_n \rangle - \tau(1-2\theta) \langle \mu x_n,x_n \rangle = \langle 2B_H x_n,x_n \rangle - \tau(1-2\theta)\mu \\
&\xrightarrow{n \to \infty} -\tau(1-2\theta)\mu < 0.
\end{align*}
Hence \eqref{eq:semi-contractivity_condition} cannot hold for any $\tau > 0$.
\end{enumerate}
\end{proof}

\se{\begin{rem}
\aaa{Theorems \ref{impl_results} and \ref{expl_results} particularly show that, for $\theta\ne\frac12$, the hypocoercivity index is never preserved unless it vanishes. This is in stark contrast to Theorem \ref{th1.5}(iii).}
\end{rem}}

\se{\begin{rem}
Theorem \ref{expl_results} discusses the effect of $\theta$-methods with $0 \leq \theta < \frac{1}{2}$ on the hypocoercivity index \aaa{and on semi-contractivity.} However, the second statement does not say anything about hypocontractivity, because a system can be hypocontractive without being semi-contractive. Indeed, following a remark in \S\ref{sec:prelim}, hypocontractivity of $M_{\theta,\tau}(-B)$ is characterized by $\sigma(M_{\theta,\tau}(-B)) \subset \D$. However, by spectral mapping, $\sigma(M_{\theta,\tau}(-B)) = M_{\theta,\tau}(\sigma(-B))$ and an elementary calculation shows that 
\begin{align*}
\lvert M_{\theta,\tau}(z) \rvert < 1 \iff 2\Re(z) \tau + (1-2\theta)\lvert z \rvert^2 \tau^2 < 0.
\end{align*}
As $\sigma(-B)$ is a bounded subset of $\HHH$, this implies immediately that for $\tau > 0$ small enough, $M_{\theta,\tau}(-B)$ will actually be hypocontractive although not being semi-contractive.
\end{rem}}

\se{\begin{rem}
Comparing Theorem \ref{impl_results} and Theorem \ref{expl_results} one can see that for $\theta > \frac{1}{2}$ \aaa{some ``coarse structure'' of the evolution} is preserved (e.g. semi-dissipative systems are mapped to semi-contractive ones), but the finer hypocoercivity index structure gets destroyed. On the other hand, for $\theta < \frac{1}{2}$ even the \aaa{mentioned ``coarse structure''} is not preserved. In view of the fact that $\frac{1}{2} \leq \theta < 1$ is exactly the range where $\theta$-methods are A-stable, this can be seen as an instance of the general principle that A-stable methods tend to be well-suited for structure-preserving discretizations. A prominent example is the preservation of maximal $L^p$-regularity as discussed in \cite{MR3582825}, \cite{MR4410757}, and \cite{MR3745012}. The hypocoercivity index is special in this regard because A-stability is not sufficient to ensure its preservation as it can be destroyed by overdamping. Thus, the midpoint method, being exactly at the borderline of A-stability, is the only $\theta$-method that preserves it.
\end{rem}}

\section{Examples}\label{sec:ex}
We first want to illustrate our results by extending \cite[Example 5.3]{arnold2026connectionhypocoercivityhypocontractivitycayley} to the implicit and explicit Euler method.

\begin{example}
We consider $\HH = \C^2$ and the matrix
\begin{align*}
B = \begin{pmatrix}
0 &\frac{1}{2} \\
-\frac{1}{2} &1
\end{pmatrix}
\end{align*}
with corresponding Hermitian part 
\begin{align*}
B_H = \begin{pmatrix}
0 &0 \\
0 &1
\end{pmatrix} \geq 0.
\end{align*}
As $B_H$ is obviously not positive definite but
\begin{align*}
B_H + B^* B_H B = 
\begin{pmatrix}
\frac{1}{4} &-\frac{1}{2}\\
-\frac{1}{2} &2
\end{pmatrix}
\end{align*}
is (because its eigenvalues are positive), B has hypocoercivity index $1$. The corresponding discrete iteration operator obtained by applying the \emph{explicit Euler method} (i.e.\ with $\theta=0$) is
\begin{align*}
D(\tau) = \begin{pmatrix}
1 &-\frac{\tau}{2} \\
\frac{\tau}{2} &1-\tau
\end{pmatrix}.
\end{align*}
It can be seen easily that $D(\tau)$ is not semi-contractive for any $\tau > 0$ because
\begin{align*}
\left\Vert 
\begin{pmatrix}
1 &-\frac{\tau}{2} \\
\frac{\tau}{2} &1-\tau
\end{pmatrix}
\begin{pmatrix}
1 \\
0
\end{pmatrix}
\right\Vert
=
\left\Vert
\begin{pmatrix}
1 \\
\frac{\tau}{2}
\end{pmatrix}
\right\Vert > 1.
\end{align*}
Hence the spectral norm of $D(\tau)$ must be strictly larger than $1$, as was predicted by Theorem \ref{expl_results} (ii). 

If we apply the \emph{implicit Euler method} (i.e.\ with $\theta=1$), we obtain
\begin{align*}
D(\tau) = 
\frac{1}{\left(1+\frac{\tau}{2}\right)^2}
\begin{pmatrix}
1+\tau &-\frac{\tau}{2} \\
\frac{\tau}{2} &1
\end{pmatrix}
\end{align*}
as the discrete iteration operator. In order to show that $D(\tau)$ is a contraction for all $\tau > 0$, we note that the spectral norm of a real valued matrix
\begin{align*}
\begin{pmatrix}
a &b \\
c &d
\end{pmatrix}
\end{align*}
is given by
\begin{align*}
\sqrt{\frac{g + \sqrt{g^2-4h}}{2}},
\end{align*}
where $g = a^2+b^2+c^2+d^2$ and $h = (ad-bc)^2$ (this follows from the fact that the spectral norm is the largest singular value). Applying this formula to $(1+\frac{\tau}{2})^2\,D(\tau)$ 
we deduce 
\begin{equation}
g 
=2\left(\left(1+\frac{\tau}{2}\right)^2 + \frac{\tau^2}{2}\right) , \qquad \label{eq:g} 
h 
=\left( 1 + \frac{\tau}{2} \right)^4 . 
\end{equation}
Consequently,
\begin{align}\label{eq:g_4h_exp}
g^2-4h 
= 4\tau^2 + 4\tau^3 + 2\tau^4.
\end{align}
Now we note that
\begin{align}\label{eq:contr_exp}
\Vert D(\tau) \Vert < 1 \iff \Vert D(\tau) \Vert^2 < 1 \iff \frac{\frac{g + \sqrt{g^2-4h}}{2}}{\left( 1+\frac{\tau}{2} \right)^4} < 1 \iff \sqrt{g^2-4h} < 2\left( 1+\frac{\tau}{2} \right)^4 - g
\end{align}
and further, using \eqref{eq:g},
\begin{align*}
2\left( 1+\frac{\tau}{2} \right)^4 - g 
= 2\tau + \frac{3\tau^2}{2} + \tau^3 + \frac{\tau^4}{8} > 0.
\end{align*}
Thus, we can square the last expression of \eqref{eq:contr_exp} again:
\begin{align*}
\sqrt{g^2-4h} < 2\left( 1+\frac{\tau}{2} \right)^4 - g \iff g^2-4h < \left( 2\tau + \frac{3\tau^2}{2} + \tau^3 + \frac{\tau^4}{8} \right)^2 ,
\end{align*}
and the latter is true because \eqref{eq:g_4h_exp} yields
\begin{align*}
g^2-4h = 4\tau^2 + 4\tau^3 + 2\tau^4 < \left( 2\tau + \frac{3\tau^2}{2} \right)^2 < \left( 2\tau + \frac{3\tau^2}{2} + \tau^3 + \frac{\tau^4}{8} \right)^2.
\end{align*}
This shows contractivity of $D(\tau)$ for any $\tau > 0$, as was predicted by Theorem \ref{impl_results}. \qed
\end{example}

As a second example we consider the case where the hypocoercivity index is $0$.

\begin{example}
We consider $\HH = \C^2$ and the matrix
\begin{align*}
B = \begin{pmatrix}
1 &\frac{1}{2} \\
-\frac{1}{2} &1
\end{pmatrix}
\end{align*}
with corresponding Hermitian part 
\begin{align*}
B_H = \begin{pmatrix}
1 &0 \\
0 &1
\end{pmatrix}.
\end{align*}
As $B_H$ is positive definite, B has hypocoercivity index $0$. The corresponding discrete iteration operator obtained by applying the \emph{explicit Euler method} is
\begin{align*}
D(\tau) = \begin{pmatrix}
1-\tau &-\frac{\tau}{2} \\
\frac{\tau}{2} &1-\tau
\end{pmatrix} .
\end{align*}
Since
\begin{align*}
D(\tau)^T D(\tau) = \left((1-\tau)^2 + \frac{\tau^2}{4}\right) \Id,
\end{align*}
we deduce that
\begin{align*}
\Vert D(\tau) \Vert = \sqrt{ (1-\tau)^2 + \frac{\tau^2}{4} }.
\end{align*}
As $f(\tau) \coloneqq (1-\tau)^2 + \frac{\tau^2}{4}$ is a parabola pointing upwards which satisfies $f(0) = 1$, $f(1) = \frac{1}{4}$, and $f \geq 0$, it is clear that the set of all $\tau > 0$ such that $f(\tau) \leq 1$ has the form $(0,\tau_0]$ for some $\tau_0 > 0$ and then the set $\{\tau>0\,\big|\,f(\tau) < 1\}$ is given by $(0,\tau_0)$. Consequently, the same holds for the set of all $\tau > 0$ where $D(\tau)$ is (semi-)contractive, confirming Theorem \ref{expl_results}. 

If we apply the \emph{implicit Euler method}, we obtain
\begin{align*}
D(\tau) = 
\begin{pmatrix}
1+\tau &\frac{\tau}{2} \\
-\frac{\tau}{2} &1+\tau
\end{pmatrix}^{-1}
= \frac{1}{(1+\tau)^2 + \frac{\tau^2}{4}}
\begin{pmatrix}
1+\tau &-\frac{\tau}{2} \\
\frac{\tau}{2} &1+\tau
\end{pmatrix}
\end{align*}
as the discrete iteration operator. Similar to before, one 
computes
\begin{align*}
\Vert D(\tau) \Vert = \frac{1}{\sqrt{ (1+\tau)^2 + \frac{\tau^2}{4}}} < 1
\end{align*}
for any $\tau > 0$, thus confirming Theorem \ref{impl_results}. \qed
\end{example}

\bibliographystyle{pamm}
\bibliography{bibliography}

\end{document}